\def\RSsubtxt{section~}\newref{sub}{name = \RSsubtxt}}
\def\RSthmtxt{theorem~}\newref{thm}{name = \RSthmtxt}}
\def\RSlemtxt{lemma~}\newref{lem}{name = \RSlemtxt}}
\theoremstyle{plain}
\newtheorem{thm}{\protect\theoremname}[section]
  \theoremstyle{remark}
  \newtheorem{rem}[thm]{\protect\remarkname}
  \theoremstyle{definition}
  \newtheorem{defn}[thm]{\protect\definitionname}
  \theoremstyle{plain}
  \newtheorem{lem}[thm]{\protect\lemmaname}
\def\tobar{\mathrel{\mkern3mu  \vcenter{\hbox{$\scriptscriptstyle+$}}%
                    \mkern-12mu{\to}}}
  \providecommand{\definitionname}{Definition}
  \providecommand{\lemmaname}{Lemma}
  \providecommand{\remarkname}{Remark}
\providecommand{\theoremname}{Theorem}
\begin{document}

\title{An ordered framework for partial multivalued functors}

\author{{Alveen Chand} and {Ittay Weiss}}

\maketitle

\begin{abstract}
The category ${\bf Rel}$ of sets and relations intimately ties the
notions of function, partial multivalued function, and direct image
under a function through the description of ${\bf Rel}$ as the Kleisli
category of the covariant power set functor on ${\bf Set}$. We present
a suitable framework to obtain a similar relationship between the
concepts of functor, partial multivalued functor, and the direct image
under a functor. 
\end{abstract}

\section{Introduction\label{sec:Introduction}}

Partial multivalued functions arise naturally in the presence of uncertainty
or partial information. In computer science, we mention \cite{taxonomy}
in the context of complexity theory and \cite{relDatabse} in the
context of multivalued relational databases, where partial multivalued
functions appear prominently. In mathematics, multivalued functions
are common-place in complex analysis and used in homotopy theory.
Mathematically, a partial multivalued function $S\tobar T$ can be
modeled as a relation from $S$ to $T$ or as a function $S\to\mathcal{P}(T)$
to the power set of $T$. Often, the sets $S$ and $T$ are endowed
with extra structure, for instance an ordering, and the partial multivalued
functions are to preserve that information in some sense. More generally,
the domain $S$ and the codomain $T$ may be categories, and instead
of a partial multivalued function one is interested in the notion
of a partial multivalued functor. We now take two straightforward
ad-hoc approaches to model the notion of a partial multivalued functor. 

Extending the notion of a partial multivalued function to categories,
we define a \emph{partial multivalued functor} $F\colon\mathscr{C}\tobar\mathscr{D}$
between two small categories to consist of a partial multivalued function
$F\colon{\rm ob}\mathscr{C}\tobar{\rm ob}\mathscr{D}$ and a partial
multivalued function $F\colon\mathscr{C}(C,C')\tobar\bigcup_{D\in FC,D'\in FC'}\mathscr{D}(D,D')$,
for all $C,C'\in{\rm ob}\mathscr{C}$, such that the identity morphisms
and composition are respected, in the sense that 
\begin{itemize}
\item for all $C\in{\rm ob}\mathscr{C}$, if $d\in F({\rm id}_{C})$, then
$d={\rm id}_{D}$ for some $D\in FC$; and
\item for all $c''\in{\rm mor}\mathscr{C}$ and $d''\in{\rm mor}\mathscr{D}$
with $d''\in F(c'')$, if $c''=c'\circ c$ for some $c,c'\in{\rm mor}\mathscr{C}$,
then there exist $d\in Fc$ and $d'\in Fc'$ with $d''=d'\circ d$. \end{itemize}
\begin{rem}
\label{rem:NotepmvFunctor}Note that the naively more immediate condition
for preservation of composition, namely that if $d\in F(c)$ and $d'\in F(c')$,
then $d'\circ d\in F(c'\circ c)$, is not a natural condition to impose.
Indeed, the composability of $c'$ with $c$ need not imply that of
$d'$ with $d$, and it seems rather contrived to demand that if $d'\circ d$
happens to exist, then $d'\circ d\in F(c'\circ c$). 
\end{rem}
With the obvious notions of identities and composition, all small
categories and partial multivalued functors form the category ${\bf Cat_{pmv}}$.
Obviously, a functor is (by slight abuse of terminology) also a partial
multivalued functor, and thus ${\bf Cat}$ is a subcategory ${\bf Cat_{pmv}}$.

The second approach is obtained by extending the notion of relation
between sets to categories. A \emph{relation} $R\colon\mathscr{C}\tobar\mathscr{D}$
between two small categories consists of a relation $R\colon{\rm ob}\mathscr{C}\tobar{\rm ob}\mathscr{D}$
and a relation $R\colon\mathscr{C}(C,C')\tobar\bigcup_{DRC,D'RC'}\mathscr{D}(D,D')$,
for all $C,C'\in{\rm ob}\mathscr{C}$, which are compatible with the
identities and the compositions, in the sense that
\begin{itemize}
\item for all $C\in{\rm ob}\mathscr{C}$, if ${\rm id}_{C}Rd$, then $d={\rm id}_{D}$
for some $D\in{\rm ob}\mathscr{D}$ with $CRD$; and
\item for all $c''\in{\rm mor}\mathscr{C}$ and $d''\in{\rm mor}\mathscr{D}$
with $c''Rd''$, if $c''=c'\circ c$ for some $c,c'\in{\rm mor}\mathscr{C}$,
then $d''=d'\circ d$ for some $d,d'\in{\rm mor}\mathscr{D}$ with
$cRd$, $c'Rd'$. \end{itemize}
\begin{rem}
Analogously to \remref{NotepmvFunctor}, the more naive demand that
$R$ be a congruence for the composition, namely that if $cRd$ and
$c'Rd'$, then $(c'\circ c)R(d'\circ d)$ suffers from the same ill-behaviour
that the naive preservation of composition exhibits. We further note
that the temptation of defining a relation from $\mathscr{C}$ to
$\mathscr{D}$ to be a subcategory of $\mathscr{C}\times\mathscr{D}$
results in a notion that is hardly related to our notion of relation,
as can easily be seen by direct inspection. 
\end{rem}
With the evident notions of identities and composition, all small
categories and relations form the category ${\bf Cat_{rel}}$. 
\begin{thm}
The categories ${\bf Cat_{pmv}}$ and ${\bf Cat_{rel}}$ are isomorphic. \end{thm}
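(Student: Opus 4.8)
The plan is to exhibit an identity-on-objects isomorphism $\Phi\colon\mathbf{Cat_{pmv}}\to\mathbf{Cat_{rel}}$ resting on the elementary bijection between partial multivalued functions and relations. Recall that a partial multivalued function $f\colon A\tobar B$, presented as a function $A\to\mathcal{P}(B)$, determines and is determined by the relation from $A$ to $B$ given by $a\mathrel{R}b\iff b\in f(a)$, and that this assignment is a bijection between partial multivalued functions $A\tobar B$ and relations from $A$ to $B$. First I would apply this bijection simultaneously at the level of objects and at the level of each hom-set: given a partial multivalued functor $F\colon\mathscr{C}\tobar\mathscr{D}$, set $C\mathrel{R}D\iff D\in FC$ on objects and $c\mathrel{R}d\iff d\in Fc$ on morphisms. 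Once we identify $D\in FC$ with $C\mathrel{R}D$ (and likewise for $C'$), the codomain $\bigcup_{D\in FC,\,D'\in FC'}\mathscr{D}(D,D')$ of the morphism part of $F$ coincides with the codomain of the morphism part of $R$, so the two bundles of data correspond with no reindexing.

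The key step is to observe that the defining axioms then transcribe verbatim. Under the substitution $d\in Fc\leftrightarrow c\mathrel{R}d$ and $D\in FC\leftrightarrow C\mathrel{R}D$, the identity clause ``if $d\in F(\mathrm{id}_C)$ then $d=\mathrm{id}_D$ for some $D\in FC$'' becomes ``if $\mathrm{id}_C\mathrel{R}d$ then $d=\mathrm{id}_D$ for some $D$ with $C\mathrel{R}D$'', which is exactly the identity clause for a relation; and the composition clause ``if $d''\in F(c'')$ and $c''=c'\circ c$ then there are $d\in Fc$, $d'\in Fc'$ with $d''=d'\circ d$'' becomes, word for word, the composition clause for a relation. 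Hence $F\mapsto R$ is a bijection from the morphisms $\mathscr{C}\tobar\mathscr{D}$ of $\mathbf{Cat_{pmv}}$ onto the morphisms $\mathscr{C}\tobar\mathscr{D}$ of $\mathbf{Cat_{rel}}$, with inverse recovered from the same equivalence read as $d\in Fc\iff c\mathrel{R}d$.

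It remains to check that $\Phi$, defined as the identity on objects and as the above bijection on morphisms, is a functor, i.e.\ that it intertwines the two composition laws and preserves identities. Unwinding the ``obvious'' composition in $\mathbf{Cat_{pmv}}$, the composite $G\circ F$ is given by $(G\circ F)x=\bigcup_{y\in Fx}Gy$ on objects and on morphisms alike; under the correspondence this is precisely relational composition, $x\mathrel{(R_G\circ R_F)}z\iff\exists y\,(x\mathrel{R_F}y\wedge y\mathrel{R_G}z)$. Similarly the identity partial multivalued functor on $\mathscr{C}$, with $FC=\{C\}$ and $Fc=\{c\}$, corresponds to the diagonal relation, which is the identity in $\mathbf{Cat_{rel}}$. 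Being the identity on objects and a bijection on each hom-set, $\Phi$ is then an isomorphism of categories, its inverse being the same correspondence read in the opposite direction.

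I do not foresee a genuine obstacle: the entire content is the bookkeeping that the relational and power-set presentations of a partial multivalued function carry the identity and composition axioms to one another unchanged. The only point demanding care is confirming that the index sets cutting out the morphism-level domains and codomains match exactly, so that no compatibility is silently lost or added in passing between the two descriptions; this is immediate once the object-level identification $D\in FC\iff C\mathrel{R}D$ is in force.
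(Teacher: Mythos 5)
Your proposal is correct and follows essentially the same route as the paper: the identity on objects together with the dictionary $C\mathrel{R}D\iff D\in FC$ and $c\mathrel{R}d\iff d\in Fc$, under which the axioms and the composition laws transcribe verbatim. The paper leaves these verifications as ``immediate''; you have simply spelled them out.
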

\begin{proof}
An isomorphism is given by the identity on objects, mapping a partial
multivalued functor $F\colon\mathscr{C}\tobar\mathscr{D}$ to the
relation $R$ where $CRD$ if $D\in FC$, and $cRd$ if $d\in Fc$,
respectively, for all objects and morphisms. The details are immediate. 
\end{proof}
The notions of function, direct image under a function, and partial
multivalued function (i.e., a relation) are related in the following
way. Consider the category ${\bf Set}$ of sets and functions and
recall the covariant power set functor $\mathcal{P}\colon{\bf Set}\to{\bf Set}$,
mapping a set to its power set and a function $f\colon S\to T$ to
the direct image function $f_{\shortrightarrow}\colon\mathcal{P}S\to\mathcal{P}T$.
This endofunctor is well-known to be part of a monad, known as the
Manes monad, whose Kleisli category is ${\bf Rel}$, the category
of sets and relations. In light of the above construction, it is natural
to expect the category ${\bf Cat_{pmv}}$, and thus also its
isomorphic copy ${\bf Cat_{rel}}$, to arise as the Kleisli category
of a monad on ${\bf Cat}$ given on objects by some sort of power
category construction, and on functors by a suitable direct image
construction. 

However, there is an immediate obstruction to such a result. It is
well known that the naive notion of image of a functor need not be
a subcategory of the codomain. A minimal example illustrating this
is the functor $K$ 
\[
\xymatrix{\star\ar[d] &  &  & \star\ar[d]\\
\CIRCLE & \Circle\ar[d] & \xRightarrow{\quad K\quad} & \LEFTcircle\ar[d]\\
 & \diamond &  & \diamond
}
\]
which is not expected to readily be reconciled with a covariant power
category functor on ${\bf Cat}$.

The aim of this work is to present a category-based formalism extending
the function/direct-image/partial-multivalued-function trio to a corresponding
menage a trois of the concepts of functor, direct image under a functor,
and partial multivalued functor. In more detail, denoting ${\bf Set}$
by ${\bf Cat_{0}}$ and ${\bf Rel}$ by ${\bf Rel_{0}}$, we construct
suitable categories ${\bf Cat_{1}}$ and ${\bf Rel_{1}}$, whose objects
are categories, and we obtain the diagram

\[
\xymatrix{{\bf Cat_{0}}\ar@(ul,dl)[]_{\mathcal{P}_{0}}\ar[d] & {\bf Cat_{1}}\ar[l]\ar[d]\ar@(ur,dr)[]^{\mathcal{P}_{1}}\\
{\bf Rel_{0}} & {\bf Rel_{1}\ar[l]}
}
\]
in which the top horizontal arrow is a morphism of monads, and the
bottom part is the Kleisli construction of the top part. In fact,
as a consequence of the obstruction mentioned above, this diagram
expands to
\[
\xymatrix{ & {\bf Cat}\ar[rd]\ar[dl]\ar@{-->}[dd]\\
{\bf Cat_{0}}\ar[dd]\ar@<4pt>[ur]\ar@<-4pt>[ur]\ar@<4pt>[rr]\ar@<-4pt>[rr]\ar@(ul,dl)[]_{\mathcal{P}_{0}} &  & {\bf Cat_{1}}\ar[dd]\ar[ll]\ar@(ur,dr)[]^{\mathcal{P}_{1}}\\
 & {\bf Cat_{pmv}}\cong{\bf Cat_{rel}}\ar[dr]\ar[dl]\ar@{-->}[dd]\\
{\bf Rel_{0}}\ar@<4pt>[rr]\ar@<-4pt>[rr]\ar@<4pt>[ur]\ar@<-4pt>[ur]\ar[dd] &  & {\bf Rel_{1}}\ar[ll]\ar[dd]\\
 & {\bf Cat_{CSLat}}\ar[dr]\ar[dl]\\
{\bf CSLat_{0}} &  & {\bf CSLat_{1}}\ar[ll]
}
\]
with ${\bf Cat_{pmv}\cong{\bf Cat_{rel}}}$ appearing as a full subcategory
of ${\bf Rel_{1}}$. Since the Eilenberg-Moore category of $\mathcal{P}_{0}$
is ${\bf CSLat_{0}}$, the category of complete semi-lattices, we
obtain at the bottom triangle extensions of it to categories.

In mathematics, the notion of partial multivalued functor appears
in homotopy theory (see \cite{Hess}, \cite{MildTameHomotopy}) where
algebraic models are defined to be multivalued functors to ${\bf Top}$
on a category with a notion of homotopy. Multivalued functors are
also discussed in \cite{Anick}, \cite{contreras2013relational},
and \cite{conrerasHeunen}. Quite generally, \cite{catOnt} discusses partial
information, non-determinacy, and multivaluedness in many contexts.
Recently, stochastic relations are applied in such areas as computation,
automata, and languages and programming (see e.g., \cite{doberkat2003,doberkat2005,doberkat2006}).
The category of stochastic relations is the Kleisli category of the
Giry monad (\cite{doberkat2007kleisli}), extending the Manes monad.
The monad we present is a similar extension of the Manes monad not
probabilistically but rather categorically. 

The categorical prerequisites for reading this work are modest. We
refer the reader to \cite{MacLane} for general information regarding
categories, monads, the Kleisli construction, and the Eilenberg-Moore
construction. 

The plan of the paper is as follows. Section~\ref{sec:Cat1} introduces
categorders and functorders, which together form the extension ${\bf Cat_{1}}$
of ${\bf Cat}$ for the required monad to act on. Section~\ref{sec:The-power-categorder}
and Section~\ref{sec:The-direct-image} describe, respectively, the
power categorder of a categorder and the direct image functorder associated
to a functorder, which together form the underlying functor $\mathcal{P}_{1}$
of the monad on ${\bf Cat_{1}}$. Section~\ref{sec:The-monad} then
establishes the monad structure on $\mathcal{P}_{1}$, and Section~\ref{sec:partial-multivalued-functors}
presents the main result, \thmref{MainResult}, obtaining the formalism
of partial multivalued functors, as well as a brief investigation
of the Eilenberg-Moore algebras of $\mathcal{P}_{1}$ as an extension
of complete join semi lattices.

\section{Categorders and functorders\label{sec:Cat1}}

In this section we extend the category ${\bf Cat}$ of small categories
and functors to the category ${\bf Cat_{1}}$ which will serve as
the ambient category for the power object monad we seek. 
\begin{defn}
\label{def:An-ordered-category}A\emph{ category with order,} or \emph{categorder},\emph{
}is a category $\mathscr{C}$ together with an ordering of each hom
set $\mathscr{C}(C,C')$ such that for all morphisms $\xymatrix{C\ar[r]^{c_{1},c_{2}} & C'\ar[r]^{c_{1}',c_{2}'} & C''}
$, if $c_{1}\le c_{2}$ and $c_{1}'\le c_{2}'$, then $c_{1}'\circ c_{1}\le c_{2}'\circ c_{2}$.
A \emph{prefunctor }$F\colon\mathscr{C}\to\mathscr{D}$ between two
categorders consists of an assignment $F\colon{\rm ob}\mathscr{C}\to{\rm ob}\mathscr{D}$
and, for all $C,C'\in{\rm ob}\mathscr{C}$, a function $F\colon\mathscr{C}(C,C')\to\mathscr{D}(FC,FC')$.
The prefunctor $F\colon\mathscr{C}\to\mathscr{D}$ is said to be \emph{monotone
}if $Fc\le Fc'$ whenever $c\le c'$, for all $C\xrightarrow{c,c'}C'$.
Finally, a monotone prefunctor $F\colon\mathscr{C}\to\mathscr{D}$
is \emph{subfunctorial }if the conditions 
\begin{itemize}
\item $F({\rm id}_{C})\le{\rm id}_{FC}$, for all $C\in{\rm ob}\mathscr{C}$
\item $F(c'\circ c)\le Fc'\circ Fc$ for all morphisms $\xymatrix{C\ar[r]^{c} & C'\ar[r]^{c'} & C''}
$ in $\mathscr{C}$
\end{itemize}
hold. A subfunctorial monotone prefunctor will be called a \emph{functorder}. 
\end{defn}
With the obvious notions of identity functorders and composition of
functorders one obtains the category ${\bf Cat_{1}}$ of all small
categorders (where small means the underlying category is small) and
functorders. 
\begin{rem}
Notice that the notion of categorder is related to the notion of category
enriched in the category ${\bf Ord}$ of ordered sets, but the notion
of enriched functor is stronger than that of functorders, in particular
an enriched functor is a functor between the underlying categories,
while a functorder need not be. 
\end{rem}
We now describe the relationships depicted in 

\[
\xymatrix{ & {\bf Cat}\ar[rd]\ar[dl]\\
{\bf Cat_{0}}\ar@{..>}@<4pt>[ur]\ar@{-->}@<-4pt>[ur]\ar@{..>}@<4pt>[rr]\ar@{-->}@<-4pt>[rr] &  & {\bf Cat_{1}}\ar[ll]
}
\]
taken from the top level of the diagram above. The functors on the
top left are adjoints, with left adjoint on top, but at the base of
the diagram only the dashed arrow is a right adjoint. Each of the
two triangles with either two dashed sides or two dotted sides commutes,
as does the triangle of solid arrows. In more detail, the two functors
leading from right to left are the forgetful functors which forget
the morphisms and only remember the set of objects. The functor ${\bf Cat}\to{\bf Cat_{1}}$
maps a category $\mathscr{C}$ to the categorder having $\mathscr{C}$
as underlying category, endowed with the reflexive ordering on each
hom set. Noting that every functor between categories is automatically
a functorder between the associated categorders we may identify ${\bf Cat}$
as a full subcategory of ${\bf Cat_{1}}$. More generally, while a
functorder need not have an underlying functor, any functorder $F\colon\mathscr{C}\to\mathscr{D}$
to a category $\mathscr{D}$ is a functor between the underlying categories.
The adjoints of the forgetful functors are given by the discrete and
indiscrete categories associated to a set. Again, the obvious discrete
categorder construction ${\bf Cat_{0}}\to{\bf Cat_{1}}$ constitutes
a functor, but it is no longer left adjoint to the forgetful functor.

\section{The power categorder of a categorder\label{sec:The-power-categorder}}

The suitable extension of the power set construction to categories
is the aim of this section. Of course, the concept we develop is the
one achieving the goal of this work. The possibility of defining the
power category of a category to consist of all subcategories of the
given category is explored in \cite{powerCat}. 

Firstly, we extend the power set construction to operate on ordered
sets instead of just sets. A\emph{ }subset $A\subseteq S$ is \emph{down
closed} if $y\in A$ and $x\le y$ imply $x\in A$, for all $x,y\in S$.
Then $\mathcal{P}_{0}S$, for an ordered set $S$ is the collection
of all down closed subsets $A\subseteq S$. $\mathcal{P}_{0}S$ is
endowed with the ordering given by set inclusion. We note that $s\mapsto{\downarrow}s=\{t\in S\mid t\le s\}$
is an order embedding $S\to\mathcal{P}_{0}S$. In the special case
where the ordering on $S$ is the reflexive relation, we note that
$\mathcal{P}_{0}(S)$ is just the power set of $S$ and the embedding
$S\mapsto\mathcal{P}_{0}S$ is the inclusion by singletons $s\mapsto\{s\}$.
More generally, for any subset $A\subseteq S$, let ${\downarrow}A=\bigcup_{a\in A}({\downarrow}a)$,
called the \emph{down closure }of $A$. 

Let us now fix a small categorder $\mathscr{C}$, for which we define
a categorder $\mathcal{P}_{1}\mathscr{C}$. Let ${\rm ob}(\mathcal{P}_{1}\mathscr{C})=\mathcal{P}_{0}({\rm ob}(\mathscr{C}))$,
namely all collections $\mathcal{C}$ of objects of $\mathscr{C}$.
Since hom sets are pair-wise disjoint, the orderings on each $\mathscr{C}(C,C')$
induce an ordering on ${\rm mor}\mathscr{C}$. The following down
closures are computed in this ambient ordered set. For objects $\mathcal{C}$
and $\mathcal{C}'$ in ${\rm ob}(\mathcal{P}_{1}\mathscr{C})$ let
${\rm id}_{\mathcal{C}}={\downarrow}\{{\rm id}_{C}\mid C\in\mathcal{C}\}$
and let $\mathscr{C}(\mathcal{C},\mathcal{C}')=\bigcup_{C\in\mathcal{C},C'\in\mathcal{C}'}\mathscr{C}(C,C')$.
The morphisms in $\mathcal{P}_{1}\mathscr{C}$ are then given by $(\mathcal{P}_{1}\mathscr{C})(\mathcal{C},\mathcal{C}')=\mathcal{P}_{0}(\mathscr{C}(\mathcal{C},\mathcal{C}'))$,
where we use the $\mathcal{P}_{0}$ construction on ordered sets.
In other words, a morphism in $(\mathcal{P}_{1}\mathscr{C})(\mathcal{C},\mathcal{C}')$
is a down closed collection ${\bf c}$ of morphisms in $\mathscr{C}$
with appropriate domains and codomains. The composition of $\xymatrix{\mathcal{C}\ar[r]^{{\bf c}} & \mathcal{C}'\ar[r]^{{\bf c'}} & \mathcal{C}''}
$ is ${\bf c}'\circ{\bf c}={\downarrow}\{c'\circ c\mid c\in{\bf c},c'\in{\bf c'}\}$,
i.e., the down closure of the set of all morphisms obtained as a composition
of morphisms $c\in{\bf c}$ and $c'\in{\bf c'}$ (whenever the composition
exists). 
\begin{rem}
\label{rem:MoreMorphismsThanP_0ofMorC}It is important to note that
${\rm mor}\mathcal{P}_{1}\mathscr{C}$ is not just $\mathcal{P}_{0}({\rm mor}\mathscr{C})$.
For instance, the empty set of morphisms is a morphism in $(\mathcal{P}_{1}\mathscr{C})(\mathcal{C},\mathcal{C}')$
for all objects $\mathcal{C},\mathcal{C}'\in{\rm ob}(\mathcal{P}_{1}\mathscr{C})$.
Many other sets of morphisms appear as a single morphism in different
hom sets in $\mathcal{P}_{1}\mathscr{C}$, so, of course, we silently
mark them differently so as to obtain a category. Consequently, ${\rm mor}(\mathcal{P}_{1}\mathscr{C})$
is significantly larger than $\mathcal{P}_{0}({\rm mor}\mathscr{C})$.
\end{rem}
The following result justifies calling $\mathcal{P}_{1}\mathscr{C}$
the \emph{power categorder} of $\mathscr{C}$. 
\begin{thm}
$\mathcal{P}_{1}\mathscr{C}$ is a categorder. \end{thm}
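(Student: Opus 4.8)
The plan is to check, in order, that the stated data assemble into a category and then that the inclusion ordering on each hom set satisfies the compatibility condition demanded of a categorder. For down-closed collections $\mathbf{f} \in (\mathcal{P}_1\mathscr{C})(\mathcal{C}, \mathcal{C}')$ and $\mathbf{g} \in (\mathcal{P}_1\mathscr{C})(\mathcal{C}', \mathcal{C}'')$ it is convenient to write
\[
S(\mathbf{f}, \mathbf{g}) = \{\, g \circ f \mid f \in \mathbf{f},\; g \in \mathbf{g},\; \mathrm{cod}(f) = \mathrm{dom}(g) \,\},
\]
so that composition in $\mathcal{P}_1\mathscr{C}$ reads $\mathbf{g} \circ \mathbf{f} = {\downarrow}S(\mathbf{f}, \mathbf{g})$. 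First I would record well-definedness: if $f \colon C \to C'$ and $g \colon C' \to C''$ with $C \in \mathcal{C}$, $C' \in \mathcal{C}'$, $C'' \in \mathcal{C}''$ are composable, then $g \circ f \in \mathscr{C}(\mathcal{C}, \mathcal{C}'')$, and since distinct hom sets of $\mathscr{C}$ are disjoint the down closure of $S(\mathbf{f}, \mathbf{g})$ stays inside $\mathscr{C}(\mathcal{C}, \mathcal{C}'')$; thus $\mathbf{g} \circ \mathbf{f}$ is a down-closed collection in the correct hom set, and the same bookkeeping places each $\mathrm{id}_{\mathcal{C}}$ in $(\mathcal{P}_1\mathscr{C})(\mathcal{C}, \mathcal{C})$.

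The technical heart of the argument, which I expect to be the main obstacle, is the compatibility of down closure with the formation of composites, namely the identity
\[
{\downarrow}S({\downarrow}X, {\downarrow}Y) = {\downarrow}S(X, Y)
\]
for arbitrary collections $X, Y$ of suitably composable morphisms of $\mathscr{C}$. The inclusion $\supseteq$ is immediate from $X \subseteq {\downarrow}X$ and $Y \subseteq {\downarrow}Y$. For $\subseteq$, given $f \le x \in X$ and $g \le y \in Y$, comparability forces $f, x$ and $g, y$ respectively to share domains and codomains, so $g \circ f$ is composable exactly when $y \circ x$ is, and in that case the compatibility axiom of the categorder $\mathscr{C}$ yields $g \circ f \le y \circ x \in S(X, Y)$. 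This is precisely the step at which the order-compatibility of composition in $\mathscr{C}$ is indispensable: everything else is formal, whereas here the defining condition of a categorder is exactly what makes the down-closure construction coherent.

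With this lemma available the remaining category axioms follow. For the identity laws I would unwind $\mathrm{id}_{\mathcal{C}'} \circ \mathbf{f} = {\downarrow}S(\mathbf{f}, \mathrm{id}_{\mathcal{C}'})$ and use that $e \le \mathrm{id}_{C'}$ together with the axiom of $\mathscr{C}$ gives $e \circ f \le \mathrm{id}_{C'} \circ f = f$, whence the left-hand side is contained in $\mathbf{f}$; the reverse inclusion holds because $f = \mathrm{id}_{\mathrm{cod}(f)} \circ f$ with $\mathrm{cod}(f) \in \mathcal{C}'$, and dually for $\mathbf{f} \circ \mathrm{id}_{\mathcal{C}}$. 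For associativity, applying the lemma twice together with associativity in $\mathscr{C}$ gives
\[
(\mathbf{h} \circ \mathbf{g}) \circ \mathbf{f} = {\downarrow}S(\mathbf{f}, S(\mathbf{g}, \mathbf{h})) = {\downarrow}S(S(\mathbf{f}, \mathbf{g}), \mathbf{h}) = \mathbf{h} \circ (\mathbf{g} \circ \mathbf{f}),
\]
where the outer equalities absorb the intermediate down closures via the lemma and the middle equality is the elementwise associativity of $\circ$ in $\mathscr{C}$ (the composability conditions matching on both sides).

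Finally, the compatibility of composition with the ordering in $\mathcal{P}_1\mathscr{C}$ is immediate once the above is in place: since each hom set is ordered by inclusion and $S(-,-)$ is evidently monotone in each argument, so is ${\downarrow}S(-,-)$, which is exactly the assertion that $\mathbf{f}_1 \subseteq \mathbf{f}_2$ and $\mathbf{g}_1 \subseteq \mathbf{g}_2$ imply $\mathbf{g}_1 \circ \mathbf{f}_1 \subseteq \mathbf{g}_2 \circ \mathbf{f}_2$. Thus $\mathcal{P}_1\mathscr{C}$ is a category whose hom-set orderings satisfy the categorder condition, and I would regard the verification of the down-closure lemma as the only genuinely nonformal part.
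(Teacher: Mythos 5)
Your proof is correct and follows essentially the same route as the paper: the down-closure absorption lemma ${\downarrow}S({\downarrow}X,{\downarrow}Y)={\downarrow}S(X,Y)$, proved via monotonicity of composition in $\mathscr{C}$, is exactly the content of the paper's inline computation with the ternary composite ${\bf c''}\circ{\bf c'}\circ{\bf c}$, and the identity and order-compatibility checks match what the paper leaves as "similar" or "straightforward." The only difference is presentational: you factor the key step into a reusable lemma, while the paper carries it out directly for associativity.
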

\begin{proof}
The verification is straightforward. We only address the arguments
that make use of the monotonicity in each variable of the composition
in a categorder (a property that follows at once from the definition),
starting with the associativity of composition in $\mathcal{P}_{1}\mathscr{C}$.
Consider $\mathcal{C}\xrightarrow{{\bf c}}\mathcal{C}'\xrightarrow{{\bf c'}}\mathcal{C}''\xrightarrow{{\bf c''}}\mathcal{C}'''$,
for which we show that $({\bf c''}\circ{\bf c'})\circ{\bf c}={\bf c''}\circ{\bf c'}\circ{\bf c}={\bf c''}\circ({\bf c'}\circ{\bf c})$,
where we introduce the ternary composition ${\bf c''}\circ{\bf c'}\circ{\bf c}={\downarrow}\{c''\circ c'\circ c\mid c\in{\bf c},c'\in{\bf c'},c''\in{\bf c''}\}$.
Let $\psi\in({\bf c''}\circ{\bf c'})\circ{\bf c}$, i.e., $\psi\le\varphi\circ c$,
where $\varphi\in{\bf c''}\circ{\bf c'}$ and $c\in{\bf c}$. Thus,
$\varphi\le c''\circ c'$, for some $c'\in{\bf c'}$ and $c''\in{\bf c''}$.
It now follows that $\psi\le\varphi\circ c\le(c''\circ c')\circ c=c''\circ c'\circ c$,
and thus $\psi\in{\bf c''}\circ{\bf c'}\circ{\bf c}$. We thus conclude
that $({\bf c''}\circ{\bf c'})\circ{\bf c}\subseteq{\bf c''}\circ{\bf c'}\circ{\bf c}$.
The reverse inclusion is immediate, so equality follows, and the second
claimed equality is obtained similarly. The neutrality of the identities
${\rm id}_{\mathcal{C}}$ is a similar argument. 
\end{proof}
Notice that ${\rm id}_{\{C\}}={\downarrow}\{{\rm id}_{C}\}$ and that
$\mathscr{C}(\{C\},\{C'\})=\mathscr{C}(C,C')$, and thus $(\mathcal{P}_{1}\mathscr{C})(\{C\},\{C'\})=\mathcal{P}_{0}(\mathscr{C}(C,C'))$
(again, we mean the order theoretic version of $\mathcal{P}_{0}$),
for all $C,C'\in{\rm ob}(\mathscr{C})$. Consequently, the assignment
$C\mapsto\{C\}$ and $C\xrightarrow{c}C'\mapsto\{C\}\xrightarrow{{\downarrow}\{c\}}\{C'\}$
is defined and constitutes a functorder $\eta_{\mathscr{C}}\colon\mathscr{C}\to\mathcal{P}_{1}(\mathscr{C})$,
essentially the inclusion by singletons. Of particular importance
to the proof of \thmref{MainResult} below, we note that when $\mathscr{C}$
is a category, i.e., a categorder with the reflexive order on each
hom set, we have that $(\mathcal{P}_{1}\mathscr{C})(\mathcal{C},\mathcal{C}')=\mathcal{P}_{0}(\mathscr{C}(\mathcal{C},\mathcal{C}'))$,
the ordinary power set ordered by inclusion. 

We note that $\mathcal{P}_{1}$ generally does not preserve discrete
categories. In fact, the only discrete category $\mathscr{D}$ such
that $\mathcal{P}_{1}\mathscr{D}$ is itself discrete is $\mathscr{D}=\emptyset$,
the initial category. The precise way in which $\mathcal{P}_{1}$
extends $\mathcal{P}_{0}$ along the discrete category construction
$\Delta\colon{\bf Cat_{0}}\to{\bf Cat_{1}}$ is detailed in the following
result. 
\begin{lem}
\label{lem:P1ExtP0Obj}For every set $S$ there exists a faithful functorder $\iota_{S}\colon(\Delta\circ\mathcal{P}_{0})S\to(\mathcal{P}_{1}\circ\Delta)S$
which is the identity on objects. In particular, ${\rm ob}(\mathcal{P}_{1}\Delta S)=\mathcal{P}_{0}S$. \end{lem}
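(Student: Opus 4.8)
The plan is to unwind both composite constructions at the level of objects and morphisms, exhibit the evident identity-on-objects assignment, and then observe that every functorder condition, together with faithfulness, holds for essentially trivial reasons because the domain categorder is discrete.

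First I would identify the objects on both sides. On the one hand, $(\Delta\circ\mathcal{P}_{0})S$ is the discrete categorder whose object set is $\mathcal{P}_{0}S$, the power set of $S$. On the other hand, applying the formula ${\rm ob}(\mathcal{P}_{1}\mathscr{C})=\mathcal{P}_{0}({\rm ob}\mathscr{C})$ to $\mathscr{C}=\Delta S$, and noting that ${\rm ob}(\Delta S)=S$ carries the reflexive order so that $\mathcal{P}_{0}$ reduces to the ordinary power set, gives ${\rm ob}((\mathcal{P}_{1}\circ\Delta)S)=\mathcal{P}_{0}S$ as well. This already establishes the final sentence of the statement and shows that the identity map on $\mathcal{P}_{0}S$ is the candidate for $\iota_{S}$ on objects.

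Next I would compute the hom sets on the target side. Since $\Delta S$ is discrete, for subsets $\mathcal{A},\mathcal{B}\subseteq S$ the only morphisms of $\Delta S$ with domain in $\mathcal{A}$ and codomain in $\mathcal{B}$ are the identities ${\rm id}_{a}$ with $a\in\mathcal{A}\cap\mathcal{B}$, so $(\Delta S)(\mathcal{A},\mathcal{B})=\{{\rm id}_{a}\mid a\in\mathcal{A}\cap\mathcal{B}\}$ with the reflexive order, and hence $(\mathcal{P}_{1}\Delta S)(\mathcal{A},\mathcal{B})$ is the ordinary power set of this set. In the domain $(\Delta\circ\mathcal{P}_{0})S$, by contrast, the hom set ${\rm Hom}(A,B)$ is empty for $A\ne B$ and the singleton $\{{\rm id}_{A}\}$ for $A=B$. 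I would then define $\iota_{S}$ to send the unique morphism ${\rm id}_{A}$ to the identity morphism ${\rm id}_{A}={\downarrow}\{{\rm id}_{a}\mid a\in A\}=\{{\rm id}_{a}\mid a\in A\}$ of $\mathcal{P}_{1}\Delta S$ at the object $A$, which indeed lies in the correct hom set $(\mathcal{P}_{1}\Delta S)(A,A)$ as its top element.

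Finally I would verify the required properties, all of which are immediate from discreteness of the domain. Monotonicity is vacuous since each hom set of $(\Delta\circ\mathcal{P}_{0})S$ has at most one element. Both subfunctoriality conditions hold with equality, as $\iota_{S}$ preserves identities on the nose by construction and $\iota_{S}({\rm id}_{A}\circ{\rm id}_{A})=\iota_{S}({\rm id}_{A})=\iota_{S}({\rm id}_{A})\circ\iota_{S}({\rm id}_{A})$. Faithfulness is likewise automatic: each domain hom set is empty or a singleton, so the induced map on hom sets is trivially injective. There is essentially no serious obstacle here; the only points requiring care are the correct reading of the object-level $\mathcal{P}_{0}$ as the ordinary power set (the objects carrying no nontrivial order) and keeping the two meanings of ${\rm id}_{A}$ notationally distinct, namely the formal identity of the discrete domain versus the full set $\{{\rm id}_{a}\mid a\in A\}$ in the target. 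It is worth remarking that $\iota_{S}$ is faithful but \emph{not} full, since for nonempty $A$ the target hom set $(\mathcal{P}_{1}\Delta S)(A,A)$ has $2^{|A|}$ elements while the source contributes only one; this is precisely the sense in which $\mathcal{P}_{1}$ properly enlarges $\mathcal{P}_{0}$.
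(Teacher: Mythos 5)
Your proposal is correct and follows the same route the paper takes; the paper simply asserts both claims as trivial (identity on objects, with $\iota_S$ sending each formal identity to the corresponding identity morphism of $\mathcal{P}_1\Delta S$), whereas you spell out the hom-set computations and the functorder axioms explicitly. The added observation that $\iota_S$ is faithful but not full is accurate and consistent with the paper's later remark that $\mathcal{P}_0$ does not embed in $\mathcal{P}_1$.
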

\begin{proof}
The fact that ${\rm ob}(\mathcal{P}_{1}\Delta S)=\mathcal{P}_{0}S$
is trivial. Extending the identity function to a functorder $\Delta\mathcal{P}_{0}S\to\mathcal{P}_{1}\Delta S$
is trivially obtained by inclusion of singletons. 
\end{proof}

\section{The direct image functorder\label{sec:The-direct-image} }

With the power categorder constructed we now address the functoriality
of $\mathcal{P}_{1}$.  We first construct, for a given functorder
$F\colon\mathscr{C}\to\mathscr{D}$, its associated direct image functorder
$F_{\shortrightarrow}\colon\mathcal{P}_{1}\mathscr{C}\to\mathcal{P}_{1}\mathscr{D}$,
which is given by $F_{\shortrightarrow}(\mathcal{C})=\{FC\mid C\in\mathcal{C}\}$,
for all $\mathcal{C}\in{\rm ob}(\mathcal{P}_{1}\mathscr{C})$, and
$F_{\shortrightarrow}({\bf c})={\downarrow}\{Fc\mid c\in{\bf c}\}$,
for all morphisms ${\bf c}$ of $\mathcal{P}_{1}\mathscr{C}$. 
\begin{thm}
$F_{\shortrightarrow}\colon\mathcal{P}_{1}\mathscr{C}\to\mathcal{P}_{1}\mathscr{D}$
is a functorder.\end{thm}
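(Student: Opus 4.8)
The plan is to verify directly the three defining conditions of a functorder, namely that $F_{\shortrightarrow}$ is a well-defined monotone prefunctor satisfying the two subfunctoriality inequalities. The unifying observation, used in every case, is that for any morphism ${\bf c}$ of $\mathcal{P}_{1}\mathscr{C}$, an element of $F_{\shortrightarrow}{\bf c}$ is by construction bounded above by $Fc$ for some $c\in{\bf c}$; combined with the monotonicity and subfunctoriality of $F$ itself, this lets each condition collapse onto the corresponding property of $F$, with the down-closures absorbing everything lying below the relevant witnesses.

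First I would check that $F_{\shortrightarrow}$ is well-typed. On objects $F_{\shortrightarrow}\mathcal{C}=\{FC\mid C\in\mathcal{C}\}$ is plainly a subset of ${\rm ob}\mathscr{D}$, hence an object of $\mathcal{P}_{1}\mathscr{D}$. On morphisms, if ${\bf c}\in(\mathcal{P}_{1}\mathscr{C})(\mathcal{C},\mathcal{C}')$ then each $c\in{\bf c}$ has domain in $\mathcal{C}$ and codomain in $\mathcal{C}'$, so each $Fc$ has domain in $F_{\shortrightarrow}\mathcal{C}$ and codomain in $F_{\shortrightarrow}\mathcal{C}'$; since the ordering on ${\rm mor}\mathscr{D}$ respects hom sets, the down-closure ${\downarrow}\{Fc\mid c\in{\bf c}\}$ stays within $\mathscr{D}(F_{\shortrightarrow}\mathcal{C},F_{\shortrightarrow}\mathcal{C}')$ and is down closed by construction, so $F_{\shortrightarrow}{\bf c}$ is a morphism of the correct type. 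Monotonicity is then immediate: if ${\bf c}\subseteq{\bf c}'$ the generating sets are nested, and so are their down-closures.

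For the identity inequality I would take $\gamma\in{\rm id}_{\mathcal{C}}={\downarrow}\{{\rm id}_{C}\mid C\in\mathcal{C}\}$, so $\gamma\le{\rm id}_{C}$ for some $C\in\mathcal{C}$; applying monotonicity and then the identity inequality for $F$ gives $F\gamma\le F({\rm id}_{C})\le{\rm id}_{FC}$, and since $FC\in F_{\shortrightarrow}\mathcal{C}$ this places $F\gamma$ inside ${\rm id}_{F_{\shortrightarrow}\mathcal{C}}$. As the target is down closed, the down-closure $F_{\shortrightarrow}({\rm id}_{\mathcal{C}})$ remains inside it, giving $F_{\shortrightarrow}({\rm id}_{\mathcal{C}})\le{\rm id}_{F_{\shortrightarrow}\mathcal{C}}$. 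For composition I would proceed identically: given $\psi\in{\bf c}'\circ{\bf c}$ there are composable $c\in{\bf c}$, $c'\in{\bf c}'$ with $\psi\le c'\circ c$, whence $F\psi\le F(c'\circ c)\le Fc'\circ Fc$ by monotonicity and the composition inequality for $F$; because $Fc\in F_{\shortrightarrow}{\bf c}$, $Fc'\in F_{\shortrightarrow}{\bf c}'$, and these are composable (the codomain of $Fc$ equals the domain of $Fc'$), the element $Fc'\circ Fc$ is a generator of $F_{\shortrightarrow}{\bf c}'\circ F_{\shortrightarrow}{\bf c}$, so $F\psi$ lies in that composite, yielding $F_{\shortrightarrow}({\bf c}'\circ{\bf c})\le F_{\shortrightarrow}{\bf c}'\circ F_{\shortrightarrow}{\bf c}$.

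I expect the only real friction to be bookkeeping rather than mathematics: keeping track of the several distinct ambient ordered sets in which the various down-closures are formed, and confirming at each step that the generator witnessing a given inequality (an ${\rm id}_{FC}$, or a composite $Fc'\circ Fc$) already lives in the target morphism, so that down-closure merely absorbs the elements beneath it. Notably, unlike the proof that $\mathcal{P}_{1}\mathscr{D}$ is a categorder, this argument does not require the bimonotonicity of composition in $\mathscr{D}$, since each inequality $F\psi\le Fc'\circ Fc$ is obtained in a single step from a witness rather than by composing two inequalities.
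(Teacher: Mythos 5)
Your proposal is correct and follows essentially the same route as the paper: the one case the paper verifies explicitly (the identity inequality $F_{\shortrightarrow}({\rm id}_{\mathcal{C}})\le{\rm id}_{F_{\shortrightarrow}\mathcal{C}}$) you argue identically, by bounding an element of the down-closure by some $F\gamma$ with $\gamma\le{\rm id}_{C}$ and chaining $F\gamma\le F({\rm id}_{C})\le{\rm id}_{FC}$. The remaining cases (well-typedness, monotonicity, the composition inequality), which the paper dismisses as straightforward, you complete correctly in the same style.
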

\begin{proof}
The verification is straightforward. Let us for instance verify that
$F_{\shortrightarrow}({\rm id}_{\mathcal{C}})\le{\rm id}_{F_{\shortrightarrow}\mathcal{C}}$,
where $\mathcal{C}\in{\rm ob}\mathcal{P}_{1}\mathscr{C}$, remembering
that $\le$ is given by set inclusion. Indeed, $F_{\shortrightarrow}({\rm id}_{\mathcal{C}})={\downarrow}(F({\downarrow}(\{{\rm id}_{C}\mid C\in\mathcal{C}\})))$
while ${\rm id}_{F_{\shortrightarrow}\mathcal{C}}={\downarrow}\{{\rm id}_{FC}\mid C\in\mathcal{C}\}$.
Thus, if $d\in F_{\shortrightarrow}({\rm id}_{\mathcal{C}})$, then
$d\le Fc$, where $c\in{\rm mor}\mathscr{C}$ with $c\le{\rm id}_{C}$
for some $C\in\mathcal{C}$. It then follows that $d\le Fc\le F({\rm id}_{C})\le{\rm id}_{FC}$,
and thus $d\in{\rm id}_{F_{\shortrightarrow}\mathcal{C}}$, as needed. \end{proof}
\begin{rem}
\label{rem:WhyFunctorders}Note that $F_{\shortrightarrow}$ need
not be a functor, even if $F$ is. This can be seen by computing $F_{\shortrightarrow}$
for the functor $K$ described in Section~\ref{sec:Introduction}. \end{rem}
\begin{thm}
Defining $\mathcal{P}_{1}(F)=F_{\shortrightarrow}$ turns $\mathcal{P}_{1}$
into a covariant endofunctor on ${\bf Cat_{1}}$.\end{thm}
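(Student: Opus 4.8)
The plan is to verify the two functor axioms, namely preservation of identities and of composition, since well-definedness of $\mathcal{P}_{1}$ on objects and on morphisms has already been secured by the two preceding results (that $\mathcal{P}_{1}\mathscr{C}$ is a categorder and that each $F_{\shortrightarrow}$ is a functorder).

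First I would check preservation of identities. For a categorder $\mathscr{C}$, the identity functorder ${\rm id}_{\mathscr{C}}$ fixes objects and morphisms, so on an object $\mathcal{C}\in{\rm ob}(\mathcal{P}_{1}\mathscr{C})$ one has $({\rm id}_{\mathscr{C}})_{\shortrightarrow}(\mathcal{C})=\{C\mid C\in\mathcal{C}\}=\mathcal{C}$, and on a morphism ${\bf c}$ one has $({\rm id}_{\mathscr{C}})_{\shortrightarrow}({\bf c})={\downarrow}\{c\mid c\in{\bf c}\}={\downarrow}{\bf c}={\bf c}$, the last equality holding because ${\bf c}$ is already down closed. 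Hence $({\rm id}_{\mathscr{C}})_{\shortrightarrow}={\rm id}_{\mathcal{P}_{1}\mathscr{C}}$.

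Next I would treat preservation of composition, that is, $(G\circ F)_{\shortrightarrow}=G_{\shortrightarrow}\circ F_{\shortrightarrow}$ for functorders $F\colon\mathscr{C}\to\mathscr{D}$ and $G\colon\mathscr{D}\to\mathscr{E}$. The object component is immediate: unwinding the definitions gives $(G\circ F)_{\shortrightarrow}(\mathcal{C})=\{G(FC)\mid C\in\mathcal{C}\}=G_{\shortrightarrow}(F_{\shortrightarrow}\mathcal{C})$.

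The only real subtlety, and the step I expect to be the main obstacle, is the morphism component, where the down closure built into the definition of $F_{\shortrightarrow}$ must be reconciled with the subsequent application of $G_{\shortrightarrow}$. Unwinding the definitions gives $(G\circ F)_{\shortrightarrow}({\bf c})={\downarrow}\{G(Fc)\mid c\in{\bf c}\}$, whereas $(G_{\shortrightarrow}\circ F_{\shortrightarrow})({\bf c})={\downarrow}\{Gd\mid d\in{\downarrow}\{Fc\mid c\in{\bf c}\}\}$, so the latter ranges $Gd$ over all $d\le Fc$ rather than only over the $Fc$ themselves. The inclusion $\subseteq$ is clear, since each $Fc$ lies in the inner down closure. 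For $\supseteq$, given $d\le Fc$ with $c\in{\bf c}$, the monotonicity of the prefunctor $G$ yields $Gd\le G(Fc)$, so $Gd$ lies in ${\downarrow}\{G(Fc)\mid c\in{\bf c}\}$; passing to down closures then gives the reverse inclusion, and the two down closed sets coincide. This is precisely the place where the monotonicity clause in the definition of a functorder is indispensable. With both components verified, the two sides agree on objects and on morphisms, establishing the functoriality of $\mathcal{P}_{1}$.
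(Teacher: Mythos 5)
Your proposal is correct and follows the same route the paper intends (the paper merely declares the verification straightforward): a direct check of the identity and composition axioms, with the only nontrivial point being that the extra down closure in $(G_{\shortrightarrow}\circ F_{\shortrightarrow})({\bf c})$ collapses via the monotonicity of $G$. You correctly isolate exactly where monotonicity of functorders is used, and the well-definedness issues are indeed covered by the preceding two theorems as you note.
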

\begin{proof}
Again, a straightforward verification shows that indeed $({\rm id}_{\mathscr{C}})_{\shortrightarrow}={\rm id}_{\mathscr{C}}$
and $(G\circ F)_{\shortrightarrow}=G_{\shortrightarrow}\circ F_{\shortrightarrow}$
hold true for all categorders $\mathscr{C}$ and functorders $\mathscr{C}\xrightarrow{G}\mathscr{D}\xrightarrow{F}\mathscr{E}$. 
\end{proof}
\remref{WhyFunctorders} can be restated now as the fact that $\mathcal{P}_{1}\colon{\bf Cat_{1}}\to{\bf Cat_{1}}$
does not restrict to an endofunctor on the subcategory ${\bf Cat}$
of categories and functors. This is precisely the reason why ${\bf Cat}$
was extended to ${\bf Cat_{1}}$. 

The relationship between the functors $\mathcal{P}_{0}$ and $\mathcal{P}_{1}$
along the discrete categorder functor $\Delta\colon{\bf Cat_{0}}\to{\bf Cat_{1}}$
and the forgetful functor $G\colon{\bf Cat_{1}}\to{\bf Cat_{0}}$
is summarized next. The trivial proof is omitted
\begin{lem}
\label{lem:P1ExtP2Mor}The functorders constructed in \lemref{P1ExtP0Obj}
form the components of a natural transformation $\iota\colon\Delta\circ\mathcal{P}_{0}\to\mathcal{P}_{1}\circ\Delta$.
Further, $\mathcal{P}_{0}=G\circ\mathcal{P}_{1}\circ\Delta$. 
\end{lem}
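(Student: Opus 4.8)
The statement bundles two independent assertions, and I would dispose of each by directly unwinding the relevant definitions; no genuine construction is needed, only careful bookkeeping of which order, and hence which down closure, is in force at each step. For the naturality claim I would fix a function $f\colon S\to S'$ and verify that the two composite functorders $\Delta\mathcal{P}_0 S\to\mathcal{P}_1\Delta S'$, namely $\mathcal{P}_1\Delta f\circ\iota_S$ and $\iota_{S'}\circ\Delta\mathcal{P}_0 f$, coincide. Since two functorders in ${\bf Cat_1}$ agree precisely when their object assignments and their hom-set functions agree, and since the source $\Delta\mathcal{P}_0 S$ is discrete, this reduces to a check on objects and a check on the identity morphisms only.

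On objects the equality is immediate: each $\iota$ is the identity on objects, $\mathcal{P}_0 f=f_{\shortrightarrow}$, and $(\Delta f)_{\shortrightarrow}=\mathcal{P}_1\Delta f$ restricts on objects to direct image, so both composites send a subset $A\subseteq S$ to $\{f(C)\mid C\in A\}=f_{\shortrightarrow}(A)$. For the morphism component, the only morphisms of $\Delta\mathcal{P}_0 S$ are the identities ${\rm id}_A$, and I would show both routes carry ${\rm id}_A$ to the identity morphism ${\rm id}_{f_{\shortrightarrow}(A)}$ of $\mathcal{P}_1\Delta S'$. The left route is transparent, since $\Delta\mathcal{P}_0 f$ sends ${\rm id}_A$ to ${\rm id}_{f_{\shortrightarrow}(A)}$ and $\iota_{S'}$ carries identities to identities. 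The one point deserving care is the right route: recalling that in the power categorder ${\rm id}_A={\downarrow}\{{\rm id}_C\mid C\in A\}$, one computes $(\Delta f)_{\shortrightarrow}({\rm id}_A)={\downarrow}\{(\Delta f)({\rm id}_C)\mid C\in A\}={\downarrow}\{{\rm id}_{f(C)}\mid C\in A\}$, which, because $\Delta S'$ carries the reflexive order and all down closures there are trivial, equals $\{{\rm id}_{f(C)}\mid C\in A\}={\rm id}_{f_{\shortrightarrow}(A)}$, matching the left route.

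For the identity $\mathcal{P}_0=G\circ\mathcal{P}_1\circ\Delta$, the object level is exactly the content ${\rm ob}(\mathcal{P}_1\Delta S)=\mathcal{P}_0 S$ already recorded in \lemref{P1ExtP0Obj}. On morphisms, $G$ simply extracts the object assignment of a functorder, so $G(\mathcal{P}_1\Delta f)=G((\Delta f)_{\shortrightarrow})$ is the map $A\mapsto\{(\Delta f)(C)\mid C\in A\}=\{f(C)\mid C\in A\}=f_{\shortrightarrow}(A)$, which is precisely $\mathcal{P}_0 f$. Equality of the two functors follows.

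I expect no real mathematical obstacle here; the only thing to guard against is purely notational, namely keeping straight the several power constructions in play (the set-level $\mathcal{P}_0$, the ordered-set $\mathcal{P}_0$, and the hom sets of $\mathcal{P}_1$) and observing that every down closure appearing above is taken in an ambient set carrying the reflexive order, so that each collapses to ordinary image or subset formation. Once this is noted, both claims rest on the two elementary facts that $\Delta f$ preserves identities and that direct image commutes with the passage $A\mapsto\{{\rm id}_C\mid C\in A\}$, which is why the verification is entirely routine.
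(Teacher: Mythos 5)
Your proposal is correct, and since the paper explicitly omits this proof as trivial, your careful unwinding of the definitions is exactly the routine verification the authors leave implicit; the reduction to objects and identity morphisms (using discreteness of the source), and the observation that all down closures collapse under the reflexive order, are the right points to make explicit.
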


\section{The power categorder monad\label{sec:The-monad}}

In this section we present the monad structure on the power categorder
functor $\mathcal{P}_{1}$. Recall that the Manes monad structure on $\mathcal{P}_{0}\colon{\bf Cat_{0}}\to{\bf Cat_{0}}$
is given by the unit $\eta\colon{\rm id}_{{\bf Cat_{0}}}\to\mathcal{P}_{0}$
with components $\eta_{S}\colon S\to\mathcal{P}_{0}S$ with
$s\mapsto\{s\}$, and multiplication $\mu\colon\mathcal{P}_{0}^{2}\to\mathcal{P}_{0}$
given by the components $\mu_{S}\colon\mathcal{P}_{0}^{2}S\to\mathcal{P}_{0}S$
by taking unions. It is not hard to see that our extension above of
$\mathcal{P}_{0}$ to a functor ${\bf Ord}\to{\bf Ord}$ on the category
of ordered sets and order preserving functions also gives rise to
a monad, where the unit $\eta\colon{\rm id}_{{\bf Ord}}\to\mathcal{P}_{0}$
is given by $\eta_{S}(s)={\downarrow} \{s\}$, and $\mu_{S}$ is still
given by unions. 

In Section~\ref{sec:The-power-categorder} we already obtained the
components $\eta_{\mathscr{C}}\colon\mathscr{C}\to\mathcal{P}_{1}\mathscr{C}$,
given by $C\mapsto\{C\}$ and $c\mapsto{\downarrow}\{c\}$, and it
is easy to verify that they form a natural transformation ${\rm id}_{{\bf Cat_{1}}}\to\mathcal{P}_{1}$.
We shall now obtain a multiplication natural transformation $\mu\colon\mathcal{P}_{1}^{2}\to\mathcal{P}_{1}$.
Recall that we extended the notation $\mathscr{C}(C,C')$ to collection
$\mathcal{C},\mathcal{C}'$ of objects by means of $\mathscr{C}(\mathcal{C},\mathcal{C}')=\bigcup_{C\in\mathcal{C},C'\in\mathcal{C}'}\mathscr{C}(C,C')$.
We may further extend the notation to families $\mathbb{C},\mathbb{C}'$
of collections of objects by defining $\mathscr{C}(\mathbb{C},\mathbb{C}')=\bigcup_{\mathcal{C}\in\mathbb{C},\mathcal{C}'\in\mathbb{C}'}\mathscr{C}(\mathcal{C},\mathcal{C}')$. 

Looking into the structure of $\mathcal{P}_{1}^{2}\mathscr{C}$ we
have ${\rm ob}(\mathcal{P}_{1}^{2}\mathscr{C})=\mathcal{P}_{0}^{2}({\rm ob}\mathscr{C})$
and $(\mathcal{P}_{1}^{2}\mathscr{C})(\mathbb{C},\mathbb{C}')=\mathcal{P}_{0}((\mathcal{P}_{1}\mathscr{C})(\mathbb{C},\mathbb{C}'))=\mathcal{P}_{0}^{2}(\mathscr{C}(\mathbb{C},\mathbb{C}'))$.
Thus, given a categorder $\mathscr{C}$, we define the following functorder
$\mu_{\mathscr{C}}\colon\mathcal{P}_{1}^{2}\mathscr{C}\to\mathcal{P}_{1}\mathscr{C}$.
On objects $\mu_{\mathscr{C}}\colon\mathcal{P}_{0}(\mathcal{P}_{0}({\rm ob}\mathscr{C}))\to\mathcal{P}_{0}({\rm ob}\mathscr{C})$
is given by $\mu_{{\rm ob}(\mathscr{C})}$, the component of the Manes multiplication
for $\mathcal{P}_{0}$. On morphisms, $\mu_{\mathscr{C}}\colon(\mathcal{P}_{1}^{2}\mathscr{C})(\mathbb{C},\mathbb{C}')\to(\mathcal{P}_{1}\mathscr{C})(\mu_{\mathscr{C}}\mathbb{C},\mu_{\mathscr{C}}\mathbb{C}')$
is the function $\mathcal{P}_{0}^{2}(\mathscr{C}(\mathbb{C},\mathbb{C}'))\to\mathcal{P}_{0}(\mathscr{C}(\mu_{\mathscr{C}}\mathbb{C},\mu_{\mathscr{C}}\mathbb{C}'))$
given by $\mu_{\mathscr{C}(\mathbb{C},\mathbb{C}')}$, the component
of the multiplication for $\mathcal{P}_{0}$ as a monad on $\bf Ord$, noting indeed that $\mathscr{C}(\mathbb{C},\mathbb{C}')=\mathscr{C}(\mu_{\mathscr{C}}\mathbb{C},\mu_{\mathscr{C}}\mathbb{C}')$. 
\begin{thm}
The functor $\mathcal{P}_{1}\colon{\bf Cat_{1}}\to{\bf Cat_{1}}$
together with the natural transformations $\eta\colon{\rm id}_{{\bf Cat_{1}}}\to\mathcal{P}_{1}$
and $\mu\colon\mathcal{P}_{1}^{2}\to\mathcal{P}_{1}$ form a monad. \end{thm}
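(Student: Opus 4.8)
The plan is to reduce every verification to the corresponding fact for the two copies of the power set monad $\mathcal{P}_0$ that govern $\mathcal{P}_1$ levelwise: the Manes monad on ${\bf Set}$ controlling the objects, and the order-theoretic version of $\mathcal{P}_0$ on ${\bf Ord}$ controlling each hom set. Concretely, for every categorder $\mathscr{C}$ one has ${\rm ob}(\mathcal{P}_1^k\mathscr{C})=\mathcal{P}_0^k({\rm ob}\mathscr{C})$ and, for families $\mathbb{C},\mathbb{C}'$, $(\mathcal{P}_1^k\mathscr{C})(\mathbb{C},\mathbb{C}')=\mathcal{P}_0^k(\mathscr{C}(\mathbb{C},\mathbb{C}'))$, while by construction the components of $\eta$ and $\mu$ act on objects as the ${\bf Set}$-components $\eta_{{\rm ob}\mathscr{C}},\mu_{{\rm ob}\mathscr{C}}$ and on each hom set as the ${\bf Ord}$-components $\eta_{\mathscr{C}(\mathbb{C},\mathbb{C}')},\mu_{\mathscr{C}(\mathbb{C},\mathbb{C}')}$. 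Thus once the components are seen to be functorders assembling into natural transformations, the monad laws follow by checking them separately on objects and on each hom set, where they hold because $\mathcal{P}_0$ is a monad on both ${\bf Set}$ and ${\bf Ord}$.

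First I would confirm that each $\mu_{\mathscr{C}}$ is a functorder, the analogous fact for $\eta_{\mathscr{C}}$ having been recorded in Section~\ref{sec:The-power-categorder}. Monotonicity is immediate, since $\mu_{\mathscr{C}}$ sends a morphism $\mathbf{C}$ of $\mathcal{P}_1^2\mathscr{C}$ to the union $\bigcup_{\mathbf{c}\in\mathbf{C}}\mathbf{c}$, which is monotone in $\mathbf{C}$ for inclusion. For the subfunctorial conditions I would in fact show that $\mu_{\mathscr{C}}$ preserves identities and composition strictly: using that down closure commutes with unions, ${\downarrow}(\bigcup_i A_i)=\bigcup_i{\downarrow}A_i$, one computes $\mu_{\mathscr{C}}(\mathbf{C}'\circ\mathbf{C})=\mu_{\mathscr{C}}(\mathbf{C}')\circ\mu_{\mathscr{C}}(\mathbf{C})$ and $\mu_{\mathscr{C}}({\rm id}_{\mathbb{C}})={\rm id}_{\mu_{\mathscr{C}}\mathbb{C}}$, both as genuine equalities, so that the required inequalities hold a fortiori.

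Next I would establish naturality of $\eta$ and $\mu$. For a functorder $F\colon\mathscr{C}\to\mathscr{D}$, the component $\mathcal{P}_1(F)=F_{\shortrightarrow}$ acts on objects as $\mathcal{P}_0(F)$ and on the hom set $(\mathcal{P}_1\mathscr{C})(\mathcal{C},\mathcal{C}')=\mathcal{P}_0(\mathscr{C}(\mathcal{C},\mathcal{C}'))$ as the order-theoretic direct image of the monotone map $F\colon\mathscr{C}(\mathcal{C},\mathcal{C}')\to\mathscr{D}(F_{\shortrightarrow}\mathcal{C},F_{\shortrightarrow}\mathcal{C}')$, i.e.\ as $\mathcal{P}_0(F)$ on that ordered set; likewise for $\mathcal{P}_1^2(F)$. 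Consequently the naturality square for $\eta$ (resp.\ $\mu$) restricts on objects to the naturality square of the Manes monad and on each hom set to that of the ${\bf Ord}$-monad $\mathcal{P}_0$, both of which commute. The only point requiring care is that these restrictions are legitimate, which uses the identity $\mathscr{C}(\mathbb{C},\mathbb{C}')=\mathscr{C}(\mu_{\mathscr{C}}\mathbb{C},\mu_{\mathscr{C}}\mathbb{C}')$ recorded just before the statement, so that source and target hom sets of $\mu_{\mathscr{C}}$ are literally the same ordered set on which the ${\bf Ord}$-multiplication is taken.

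Finally, the associativity law $\mu_{\mathscr{C}}\circ\mathcal{P}_1(\mu_{\mathscr{C}})=\mu_{\mathscr{C}}\circ\mu_{\mathcal{P}_1\mathscr{C}}$ and the unit laws $\mu_{\mathscr{C}}\circ\mathcal{P}_1(\eta_{\mathscr{C}})={\rm id}=\mu_{\mathscr{C}}\circ\eta_{\mathcal{P}_1\mathscr{C}}$ are equalities of functorders, hence are verified by comparing the two sides on objects and on each hom set. On objects each law is exactly the corresponding monad law for the Manes monad on ${\bf Set}$, and on each hom set it is the corresponding law for $\mathcal{P}_0$ on ${\bf Ord}$; both are already known. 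The main obstacle I anticipate is bookkeeping rather than conceptual: by \remref{MoreMorphismsThanP_0ofMorC}, ${\rm mor}(\mathcal{P}_1\mathscr{C})$ is strictly larger than $\mathcal{P}_0({\rm mor}\mathscr{C})$ and a given set of morphisms may represent arrows in many distinct hom sets, so one cannot argue globally on morphisms. The verifications must therefore proceed hom set by hom set with domains and codomains tracked carefully, the identity $\mathscr{C}(\mathbb{C},\mathbb{C}')=\mathscr{C}(\mu_{\mathscr{C}}\mathbb{C},\mu_{\mathscr{C}}\mathbb{C}')$ being precisely what makes this hom-set-wise reduction coherent.
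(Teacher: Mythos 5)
Your proposal is correct and takes the same route the paper intends: the paper's proof is a one-line assertion that the verification is straightforward and ``very similar to the proof that the Manes monad $\mathcal{P}_{0}\colon{\bf Set}\to{\bf Set}$ is a monad,'' which is precisely the levelwise reduction you carry out --- objects governed by $\mathcal{P}_{0}$ on ${\bf Set}$ and each hom set by $\mathcal{P}_{0}$ on ${\bf Ord}$. Your added observations that $\mu_{\mathscr{C}}$ preserves identities and composition strictly and that the hom-set-wise bookkeeping rests on $\mathscr{C}(\mathbb{C},\mathbb{C}')=\mathscr{C}(\mu_{\mathscr{C}}\mathbb{C},\mu_{\mathscr{C}}\mathbb{C}')$ supply exactly the detail the paper omits.
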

\begin{proof}
This is once more a rather straightforward task, very similar to the
proof that the Manes monad $\mathcal{P}_{0}\colon{\bf Set}\to{\bf Set}$
is a monad. 
\end{proof}
The power categorder monad $\mathcal{P}_{1}$ is related to the power
set monad $\mathcal{P}_{0}$ in the following manner, which is
the formal presentation of the claim that the monad $\mathcal{P}_{0}\colon{\bf Set}\to{\bf Set}$
is the object part of $\mathcal{P}_{1}$. Let us recall the notion
of a functor of monads (see \cite{FormalMonads}). Given a monad $(S,\mu,\eta)$
on a category ${\bf C}$ and a monad $(T,\mu,\eta)$ on a category
${\bf D}$, a functor of monads is a functor $F\colon{\bf D}\to{\bf C}$
together with a natural transformation $\sigma\colon S\circ F\to F\circ T$
satisfying $F\eta=\sigma\circ\eta F$ and $\sigma\circ\mu F=F\mu\circ\sigma T\circ S\sigma$.
It is trivial to verify that the forgetful functor $G\colon{\bf Cat_{1}}\to{\bf Cat_{0}}$
is a monad functor when considered with the natural transformation
$\sigma\colon\mathcal{P}_{0}\circ G\to G\circ\mathcal{P}_{1}$, which
in fact is a natural isomorphism. In contrast, the discrete categorder
functor $\Delta\colon{\bf Cat_{0}}\to{\bf Cat_{1}}$ together with
the natural transformation $\iota$ (see \lemref{P1ExtP0Obj} and
\lemref{P1ExtP2Mor}) fail to form a monad functor, since $\iota$
relates the natural transformations in the wrong order. In other words,
the monad $\mathcal{P}_{0}\colon{\bf Set}\to{\bf Set}$ does not embed
in $\mathcal{P}_{1}$.

\section{partial multivalued functors\label{sec:partial-multivalued-functors}}

In this section we obtain ${\bf Cat_{1}}$ as a framework for partial
multivalued functors, analogously to how ${\bf Set}$ serves as a
framework for partial multivalued functions. 

Let ${\bf Rel_{1}}$ be the Kleisli category of the monad $\mathcal{P}_{1}\colon{\bf Cat_{1}\to{\bf Cat_{1}}}$.
Thus, the objects of ${\bf Rel_{1}}$ are all small categorders, and
morphisms $F\colon\mathscr{C}\tobar\mathscr{D}$ are given by morphisms
$\hat{F}\colon\mathscr{C}\to\mathcal{P}_{1}\mathscr{D}$ in ${\bf Cat_{1}}$.
The identity ${\rm id}_{\mathscr{C}}\colon\mathscr{C}\tobar\mathscr{C}$
at $\mathscr{C}$ is the component $\eta_{\mathscr{C}}\colon\mathscr{C}\to\mathcal{P}_{1}\mathscr{C}$
of the natural transformation from Section~\ref{sec:The-monad},
and the composition of $F\colon\mathscr{C}\tobar\mathscr{D}$ with
$G\colon\mathscr{D}\tobar\mathscr{E}$ is given by $\mathscr{C}\xrightarrow{\hat{F}}\mathcal{P}_{1}\mathscr{D}\xrightarrow{\mathcal{P}_{1}G}\mathcal{P}_{1}\mathcal{P}_{1}\mathscr{E}\xrightarrow{\mu_{\mathscr{E}}}\mathcal{P}_{1}\mathscr{E}$,
utilizing the multiplication natural transformation $\mu$ defined
in Section~\ref{sec:The-monad}. 
The following result is the main theorem of this work where the category of categories and partial multivalued functors is identified as a full subcategory of $\bf Rel_1$.
\begin{thm}
\label{thm:MainResult}The category ${\bf Cat_{pmv}}$ is isomorphic
to the full subcategory of ${\bf Rel_{1}}$ spanned by the categories. 
\end{thm}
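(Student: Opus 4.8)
The plan is to exhibit an isomorphism that is the identity on objects and, on hom sets, the tautological identification of a functorder into a power categorder with a partial multivalued functor. The objects of the full subcategory of ${\bf Rel_1}$ in question are exactly the small categorders whose hom-set orders are reflexive, that is, the small categories under the embedding ${\bf Cat}\hookrightarrow{\bf Cat_1}$, so the object-assignment is the obvious one. Fix such categories $\mathscr{C}$ and $\mathscr{D}$. A morphism $\mathscr{C}\tobar\mathscr{D}$ in ${\bf Rel_1}$ is by definition a functorder $\hat{F}\colon\mathscr{C}\to\mathcal{P}_1\mathscr{D}$, and the first step is to unpack this datum. Here I would invoke the observation recorded after the construction of $\mathcal{P}_1$ that, since $\mathscr{D}$ is a category, $(\mathcal{P}_1\mathscr{D})(\mathcal{D},\mathcal{D}')=\mathcal{P}_0(\mathscr{D}(\mathcal{D},\mathcal{D}'))$ is an ordinary power set ordered by inclusion, whence every down closure occurring inside $\mathcal{P}_1\mathscr{D}$ collapses to the set itself. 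Thus $\hat{F}$ amounts to an assignment $C\mapsto\hat{F}C\subseteq{\rm ob}\mathscr{D}$ together with, for each $c\colon C\to C'$, a subset $\hat{F}c\subseteq\bigcup_{D\in\hat{F}C,\,D'\in\hat{F}C'}\mathscr{D}(D,D')$ --- precisely the data of a partial multivalued functor $\mathscr{C}\tobar\mathscr{D}$. Monotonicity of $\hat{F}$ is vacuous because the order on $\mathscr{C}$ is reflexive.

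Next I would match the two subfunctorial inequalities with the two clauses defining a partial multivalued functor. Because $\le$ in $\mathcal{P}_1\mathscr{D}$ is inclusion and the pertinent down closures are trivial, one has ${\rm id}_{\hat{F}C}=\{{\rm id}_D\mid D\in\hat{F}C\}$ and $\hat{F}c'\circ\hat{F}c=\{d'\circ d\mid d\in\hat{F}c,\,d'\in\hat{F}c'\}$. Hence $\hat{F}({\rm id}_C)\le{\rm id}_{\hat{F}C}$ asserts that each element of $\hat{F}({\rm id}_C)$ equals ${\rm id}_D$ for some $D\in\hat{F}C$, and $\hat{F}(c'\circ c)\le\hat{F}c'\circ\hat{F}c$ asserts that each element of $\hat{F}(c'\circ c)$ factors as $d'\circ d$ with $d\in\hat{F}c$ and $d'\in\hat{F}c'$. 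These are verbatim the identity and composition conditions from the definition in the introduction, so the identification of underlying data is a bijection on hom sets.

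It then remains to verify functoriality of this bijection. For identities, the Kleisli identity $\eta_\mathscr{C}$ sends $C\mapsto\{C\}$ and $c\mapsto{\downarrow}\{c\}=\{c\}$, which is exactly the identity partial multivalued functor. For composition, given $\hat{F}\colon\mathscr{C}\to\mathcal{P}_1\mathscr{D}$ and $\hat{G}\colon\mathscr{D}\to\mathcal{P}_1\mathscr{E}$, I would evaluate the Kleisli composite $\mu_\mathscr{E}\circ\mathcal{P}_1\hat{G}\circ\hat{F}$. On an object $C$ it produces $\mu$ applied to $\{\hat{G}D\mid D\in\hat{F}C\}$, namely $\bigcup_{D\in\hat{F}C}\hat{G}D$; on a morphism $c$ it produces $\mu$ applied to ${\downarrow}\{\hat{G}d\mid d\in\hat{F}c\}$. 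The point requiring care is that applying $\mu$, which is a union, to this down-closed family returns $\bigcup_{d\in\hat{F}c}\hat{G}d$: every member of the down closure is contained in some $\hat{G}d$, while each $\hat{G}d$ is itself a member, so the two unions agree. Both formulas are exactly the relational composition of the underlying partial multivalued functors, on objects and on morphisms respectively.

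The main obstacle is thus not conceptual but a matter of bookkeeping across the three nested power-set layers of $\mathcal{P}_1\mathcal{P}_1\mathscr{E}$: one must check that the down closure built into the direct image $\mathcal{P}_1\hat{G}$ interacts with the union $\mu$ so as to reproduce the relational composite and introduce no spurious morphisms. Once this composition computation is settled, the identity-on-data bijection of hom sets gives full faithfulness, and the object-assignment is a bijection, so the construction is an isomorphism of categories, as claimed.
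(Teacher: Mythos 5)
Your proposal is correct and follows essentially the same route as the paper's proof: the identity on objects, the tautological identification of a functorder $\hat{F}\colon\mathscr{C}\to\mathcal{P}_1\mathscr{D}$ with a partial multivalued functor via the observation that $(\mathcal{P}_1\mathscr{D})(\mathcal{D},\mathcal{D}')$ is an ordinary power set when $\mathscr{D}$ is a category, and the matching of the two subfunctoriality inequalities with the two defining clauses of a partial multivalued functor. The only difference is that you spell out the Kleisli-composition computation (the interaction of $\mu$ with the down closure in $\mathcal{P}_1\hat{G}$) which the paper dismisses as easy; this is a welcome elaboration, not a different argument.
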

\begin{proof}
The claimed isomorphism is the identity on objects, and maps a morphism
$F\colon\mathscr{C}\to\mathscr{D}$ between two categories in ${\bf Rel_{1}}$,
i.e., a functorder $\hat{F}\colon\mathscr{C}\to\mathcal{P}_{1}\mathscr{D}$,
to the following partial multivalued functor $G\colon\mathscr{C}\tobar\mathscr{D}$.
Let us write $F_{0}$ and $F_{1}$ for the object and morphism parts,
respectively, of the functorder $\hat{F}$. Thus, $F_{0}\colon{\rm ob}\mathscr{C}\to\mathcal{P}_{0}({\rm ob}\mathscr{D})$
is a function, giving rise to a partial multivalued function $G_{0}\colon{\rm ob}\mathscr{C}\tobar{\rm ob}\mathscr{D}$.
The morphism part of $\hat{F}$ consists of components $F_{1}\colon\mathscr{C}(C,C')\to\mathcal{P}_{0}(\mathscr{D}(F_{0}C,F_{0}C'))$,
giving rise to a partial multivalued function $G_{1}\colon\mathscr{C}(C,C')\tobar\mathscr{D}(G_{0}C,G_{0}C')$.
The condition that if $D\xrightarrow{d}D'\in G_{1}(C\xrightarrow{c}C')$,
then $D\in G_{0}C$ and $D'\in G_{0}C'$ follows at once. Further,
since $\hat{F}({\rm id}_{C})\le{\rm id}_{FC}$ translates to $\hat{F}({\rm id}_{C})\subseteq\{{\rm id}_{D}\mid D\in FC\}$,
it follows that if $D\xrightarrow{d}D'\in G_{1}({\rm id}_{C})$, then
$D=D'$ and $d={\rm id}_{D}$. Finally, from $\hat{F}(c'\circ c)\le\hat{F}c'\circ\hat{F}c$,
i.e., $\hat{F}(c'\circ c)\subseteq\hat{F}c'\circ\hat{F}c$, it follows
that if $d''\in G_{1}(c'')$, and $c''=c'\circ c$, i.e., $d\in\hat{F}(c'\circ c)$,
then $d\in\hat{F}c'\circ\hat{F}c$, implying the existence of $d\in G_{1}c$
and $d'\in G_{1}c'$ with $d''=d'\circ d$. The verification that
the composition in ${\bf Rel_{1}}$ agrees with the composition in
${\bf Cat_{pmv}}$ and that identities agree, is easy. Finally, constructing
a morphism in ${\bf Rel_{1}}$ from a given partial multivalued functor
is similar. 
\end{proof}
Referring to the middle layer of the main diagram from \secref{Introduction},
we first mention that the same commutativity relations hold as for
the top layer (and these were described in \secref{Cat1}). The functors
themselves are the obvious ones; the functors going from right to
left are the forgetful functors (forgetting morphisms) and the functors
emanating from ${\bf Rel_{0}}$ are the immediate versions of the
discrete and indiscrete constructions. However, due to a phenomenon
related to the failure of $\mathcal{P}_{1}$ to preserve discreteness,
these functors are no longer left, respectively right, adjoint to
the forgetful functors (as is easily verified). We note (cf. the closing
discussion in \secref{The-monad}) that ${\bf Rel_{1}}\to{\bf Rel_{0}}$
is the functor induced by the fact that $\mathcal{P}_{0}$ is the
object part of $\mathcal{P}_{1}$, while neither of the functors in
the other direction is induced by an immediate structural relationship
between the two monads. Finally, the functors leading from the top
layer to the middle layer all view a single-valued entity (i.e., function,
functor, or functorder) as a particular kind of a partial multivalued
entity in the corresponding category in the middle layer.

Recall (\cite{Manes}) that the Eilenberg-Moore category of $\mathcal{P}_{0}$
is the category ${\bf CSLat_{0}}$ of complete join semi lattices
and join preserving homomorphisms. Considering the Eilenberg-Moore
category of $\mathcal{P}_{1}$ leads to the third and final layer
of the diagram. To be precise, ${\bf CSLat_{1}}$ is the Eilenberg-Moore
category of $\mathcal{P}_{1}$, and ${\bf Cat_{CSLat}}$ is its full
subcategory spanned by the categories. An object in ${\bf CSLat_{1}}$
is thus a categorder $\mathscr{C}$ together with a structure functorder
$a\colon\mathcal{P}_{1}\mathscr{C}\to\mathscr{C}$, satisfying a unit
and associativity rules. In some more detail, the structure functorder
$a$ consists of a function $a\colon\mathcal{P}_{0}({\rm ob}\mathscr{C})\to{\rm ob}\mathscr{C}$
and, for all $\mathcal{C},\mathcal{C}\subseteq{\rm ob}\mathscr{C}$,
a function $\mathcal{P}_{0}(\mathscr{C}(\mathcal{C},\mathcal{C}'))\to\mathscr{C}(a(\mathcal{C}),a(\mathcal{C}'))$.
Note that the objects of ${\bf Cat_{CSLat}}$ are defined purely categorically,
rather than categorderly, since a functorder $\mathcal{P}_{1}\mathscr{C}\to\mathscr{C}$
for a category $\mathscr{C}$ is just a functor of the underlying
categories. 

The relationship between the monads $\mathcal{P}_{0}$ and $\mathcal{P}_{1}$
implies that every $\mathcal{P}_{1}$-algebra gives rise to a $\mathcal{P}_{0}$-algebra,
namely forgetting the morphisms of an object $\mathscr{C}$ in ${\bf CSLat_{1}}$
results in a complete join semi lattice. Moreover, (and here \remref{MoreMorphismsThanP_0ofMorC}
is of importance), each hom set $\mathscr{C}(C,C')$ carries the structure
of a complete join semi lattice. ${\bf Cat_{CSLat}}$ is thus clearly
related to the category of categories internal to ${\bf CSLat}$,
though there is much more to $\mathscr{C}$ than just the existence
of complete join semi lattice structures on the objects and on the
hom sets. Fully investigating these structures will take us too far
afield though, and so, other than noting that the functors leading
from layer two to layer three are the usual embeddings of the Kleisli
category in the Eilenberg-Moore category,  we conclude by discussing
the omittance of a discrete and indiscrete constructions at the bottom
layer. For instance, given a complete join semi lattice $S$ with
three elements $u,v,w$ with $u\vee w\ne v\vee w$, the discrete category
$\Delta S$ does not carry the structure of an object in ${\bf Cat_{CSLat}}$
extending the lattice structure of $S$. Indeed, if $a\colon\mathcal{P}_{1}\Delta S\to\Delta S$
were such a structure, then noting than $\{{\rm id}_{w}\}\colon\{u,w\}\to\{v,w\}$
is a morphism in $\mathcal{P}_{1}\Delta S$, one must then have $a(\{{\rm id}_{w}\})\colon u\vee w\to v\vee w$
in $\Delta S$, but no such morphism exists. 

\bibliographystyle{plain}
\bibliography{generalReferences}

\end{document}